\documentclass[11pt]{article}

\usepackage{pgf,tikz}
\usepackage{mathrsfs}
\usetikzlibrary{arrows}

\usepackage{graphicx} 
\graphicspath{{figures/}} 

\usepackage{fullpage, url,amsmath,amsfonts,amssymb,mathtools,mathrsfs,graphicx,  algorithm, float, sansmath,epstopdf,color,caption,enumitem,tabularx}
\usepackage[final]{pdfpages}
\usepackage{amsthm}
\usetikzlibrary{automata,topaths}
\usetikzlibrary{decorations.pathreplacing,shapes.misc}
\usepackage{fancyhdr}
\usepackage{pgf,tikz}
\usepackage{mathrsfs}
\usetikzlibrary{arrows}

\usepackage{todonotes}
\usepackage{amsmath} 
\usepackage{verbatim}
\usepackage{amssymb, float,empheq}
\usepackage{color}
\usepackage{graphicx,epsfig}
\graphicspath{{figures/}}
\usepackage[applemac]{inputenc}
\usepackage{graphicx}

\usepackage{graphicx} 
\graphicspath{{figures/}} 

\def\beq{\begin{equation}}
\def\eeq{\end{equation}}
\def\baq{\begin{eqnarray}}
\def\eaq{\end{eqnarray}}
\def\baqn{\begin{eqnarray*}}
\def\eaqn{\end{eqnarray*}}

\newcommand{\ball}{\mathbb{B}}

\usepackage{multirow}
\usetikzlibrary{calc,arrows}
\theoremstyle{plain}

\newtheorem{remark}{Remark}

\newtheorem{example}{Example}
\newtheorem{theorem}{Theorem}
\newtheorem{lemma}[theorem]{Lemma}

\usepackage{blindtext}

\usepackage[colorlinks,linkcolor=blue,citecolor=red]{hyperref}

\usepackage{xcolor, framed}

\newcommand{\R}{{\mathbb R}}
\newcommand{\N}{{\mathbb N}}

\newcommand{\interior}{{\rm int}\kern 0.06em}

\def\<{\langle}
\def\>{\rangle}

\usepackage{ulem}

\usepackage{subcaption}

\usepackage{pict2e}

\if
{

\usepackage[pageref]{backref}
\renewcommand*{\backrefalt}[4]{%
\ifcase #1 %
(Not cited)%
\or
(Cited on p.~#2)%
\else
(Cited on pp.~#2)%
\fi
}

}
\fi

 \usepackage{booktabs}

\begin{document}
\title{Reducing the Lipschitz Constant in Tseng Algorithm and the Convergence Analysis}
\author{
 Ba Khiet Le \thanks{Analytical and Algebraic Methods in Optimization Research Group, Faculty of Mathematics and Statistics, Ton Duc Thang University, Ho Chi Minh City, Vietnam.\vskip 0mm
 E-mail: \texttt{lebakhiet@tdtu.edu.vn}}\qquad
 Minh Thuan Tran \thanks{Faculty of Mathematics and Statistics, Ton Duc Thang University, Ho Chi Minh City, Vietnam. E-mail: c2300078@student.tdtu.edu.vn}\qquad
 Thi Thanh Tien Pham \thanks{Faculty of Mathematics and Statistics, Ton Duc Thang University, Ho Chi Minh City, Vietnam. E-mail: c2300083@student.tdtu.edu.vn}\qquad
  Kim Thuong Vo  \thanks{Faculty of Mathematics and Statistics, Ton Duc Thang University, Ho Chi Minh City, Vietnam. E-mail: c2300081@student.tdtu.edu.vn}
 }

\maketitle

\begin{abstract}
{In this paper, we prove} the weak and linear convergence of Tseng algorithm applied to the zero of  sum of two operators problem where each operator is not necessarily monotone while  existing researches in the literature require the monotonicity of both operators. Consequently it allows us to decompose more efficiently and can reduce the Lipschitz constant significantly, which is meaningful especially in big data. Numerical examples supporting the theoretical results are also provided. 
\end{abstract}
{\bf Keywords.} 
Tseng algorithm, monotonicity, weak convergence, linear convergence\\

\noindent {\bf AMS Subject Classification.} 28B05, 34A36, 34A60, 49J52, 49J53, 93D20

\section{Introduction}
Many optimization problems (see, e.g., \cite{Bauschke,br,cp,cw,Mordukhovich,Mordukhovich24,Nesterov3,Rockafellar,roc-wets}) can be reduced into the monotone inclusion 
\beq\label{main0}
 0\in \mathcal{A}x,
 \eeq
 where $A: H \rightrightarrows H$ is a set-valued maximally monotone operator and $H$ is a Hilbert space. It is the case of minimizing a proper lower semi-continuous convex function $f: H\to \R\cup\{\infty\}$ 
 \beq
 \min_{x\in H} f(x)
 \eeq
 where the minimizers satisfy (\ref{main0}) with $\mathcal{A}=\partial f$, the convex subdifferential of $f$. We are interested in the splitting case where $\mathcal{A}$ can be split as the sum of two maximally monotone operators $A : H \rightrightarrows H$ and $B:  H \rightrightarrows H$:
\beq\label{main}
 0\in Ax+Bx,
 \eeq
  For example the constrained optimization problem  
  \beq
 \min_{x\in C} f(x)= \min_{x\in H} f(x)+I_C(x),
 \eeq
  where $C$ is a convex set and $I_C$ denotes the indicator function of $C$, can be rewritten into this form. Indeed, under some qualification assumption,  the necessary optimality condition becomes
  \beq
 0\in \partial f (x)+N_C(x),
 \eeq
 where $N_C(\cdot)$ denotes the normal cone operator to the convex set $C$, which is maximally monotone. To solve (\ref{main}) numerically, especially if both $A$ and $B$ are set-valued, we can use the Douglas-Rachford (DR) splitting algorithm (see, e.g., \cite{Bauschke,Bauschke1,cp,Davis,Giselsson,Giselsson1,He,Hong,Moursi}) to obtain the weak and linear convergence, which has the following form
\beq
{x_0\in H},\;\;x_{k+1}= [(1-\alpha)Id+\alpha R_{\gamma A} R_{\gamma B}](x_k),\; {k\in \N}
\eeq
for some $\alpha \in (0, 1]$ and $\gamma>0$. This result comes from the fact that (\ref{main})  can be rewritten as follows (see, e.g., \cite{Bauschke})
\beq
\begin{cases}
x=J_{\gamma {B}}(z),\\
z= R_{\gamma A}R_{\gamma B}(z),
\end{cases}
\eeq
where $J_{\gamma {B}}:=(Id+\gamma {B})^{-1}$ and $R_{\gamma B}:=2J_{\gamma {B}}-Id$ denote the resolvent and the reflected resolvent of $\gamma {B}$ repsectively. 
If $B$ is single-valued and Lipschitz continuous (for example if $B=\nabla f$ where $f$ is a smooth function), we can use the Forward-Backward (FB)  algorithm
\beq
{x_0\in H},\;\;x_{k+1}=  J_{\gamma A} (x_k-\gamma Bx_k),\; {k\in \N},\;\gamma>0.
\eeq
 The FB algorithm  converges linearly if $A$ or $B$ is strongly monotone and converges strongly if $B$ is cocoercive (see, e.g., \cite{Chen,cw,Lions}, see also  \cite{Apidopoulos,Attouch} for some inertial versions). However it may diverge if the strong monotonicity or cocoercivity is lacked. For example we can consider in $\R^2$ with 
 $$A=0 \;{\rm and}\; Bx=\left( \begin{array}{ccc}
x^{(2)} \\ \\
-x^{(1)}
\end{array} \right)\;\; {\rm with}\;\;x=\left( \begin{array}{ccc}
x^{(1)} \\ \\
x^{(2)}
\end{array} \right).
$$
 Then 
 $$
 \Vert x_{k+1}\Vert^2=(1+\gamma^2) \Vert x_{k}\Vert^2
 $$
 and thus the sequence $(x_k)$ is unbounded, diverges if $x_0\neq 0$. In order to improve the convergence property, Tseng \cite{Tseng} modified the Forward-Backward algorithm as follows 
 \beq\label{det}
\begin{cases}
y_{k}=J_{\gamma {A}}(x_k-\gamma Bx_k),\\
x_{k+1}= y_k+\gamma (Bx_k-By_k),
\end{cases}
\eeq
 to receive the weak convergence of $(x_k)$ to a solution of (\ref{main}).  While existing researches using Tseng algorithm require the  monotonicity  of both $A$ and $B$ (see, e.g., \cite{Bauschke,Gibali,Thong,Wang,Yang} and the references therein),  in this paper we prove the weak convergence of Tseng algorithm under only the  monotonicity of $A+B$. It means that $A$ or $B$ can be non-monotone and we can have more flexibility to get  better efficiency. In particular, it can reduce the Lipschitz constant significantly (see Section \ref{sec5}), which is a desirable property in big data where the Lipschitz constant is usually very big. In the case $A+B$ is maximally strongly monotone, the linear convergence rate is obtained. 

The paper is organized as follows. First we recall some standard notation and results in the monotone operators theory in Section \ref{sec2}. The weak and linear convergence of  Tseng algorithm under the maximal monotonicity of the sum  are proved in Section \ref{sec3}. Some numerical examples are provided in \ref{sec5}. Finally we end the paper in \ref{sec6}.

\section{Notations and preliminaries} \label{sec2}
Let $H$ be a given {real} Hilbert space with {the inner product $\langle \cdot,\cdot \rangle$ and the associated norm $\Vert \cdot \Vert$.} The closed unit ball of $H$ is denoted by $\ball$. Let $C$ be a closed convex subset of $H$ and $x\in C$. 
The distance and the projection from a point $s$ to $C$ are defined respectively by 
$${ d}(s,C):=\inf_{x\in C} \|s-x\|, \;\;{\rm Proj}_C(s):={\bar{x}} \in C \;\;{\rm such \;that \;} { d}(s,C)= \|s-{\bar{x}}\|.$$
The normal cone  to  $C$ at $x\in C$ is defined  by
$$
N_C(x):=\{x^*\in H: \langle x^*, y-x \rangle\le 0,\;\;\forall\;y\in C\}.
$$

\noindent  A  mapping  $A: H\to H$ is called $\mu$-strongly monotone ($\mu>0$) provided
$$
\langle  Ax-Ay, x-y \rangle \ge \mu\Vert x-y \Vert^2\;\;\forall\;x, y\in H.
$$
It is called $L$-Lipschitz continuous ($L>0$) if
$$
\Vert Ax-Ay\Vert \le L \Vert x-y \Vert\;\;\forall\;x, y\in H.
$$

\noindent The domain, the range and the graph of a set-valued mapping $\mathcal{F}: {H}\rightrightarrows {H}$ are defined respectively by
$${\rm dom}(\mathcal{F})=\{x\in {H}:\;\mathcal{F}(x)\neq \emptyset\},\;\;{\rm rge}(\mathcal{F})=\displaystyle\bigcup_{x\in{H}}\mathcal{F}(x)\;\;$$
 and
 $$\;\;{\rm gph}(\mathcal{F})=\{(x,y): x\in{H}, y\in \mathcal{F}(x)\}.$$

\noindent {It is called monotone provided

$$
\langle x^*-y^*,x-y \rangle \ge 0 \;\;\forall \;x, y\in H, x^*\in \mathcal{F}(x) \;{\rm and}\;y^*\in \mathcal{F}(y).
$$
In addition, if there is no monotone operator $\mathcal{G}$ such that the graph of $\mathcal{F}$ is strictly {included} in  the graph of $\mathcal{G}$, then $\mathcal{F}$ is called maximally monotone.  Next we recall the Opial's Lemma, a useful result to prove the weak convergence of sequences. 
\begin{lemma}[Opial's Lemma]
\label{l:Opial}
Let \( S \) be a nonempty subset of \( \mathcal{H} \), and let \( (x_k)_{k \in \mathbb{N}} \) be a sequence in \( \mathcal{H} \). Suppose that:  
\begin{enumerate}
\item
For every \( x^* \in S \), \( \lim_{k \to \infty} \|x_k - x^*\| \) exists.  
\item
Every sequential weak cluster point of \( (x_k)_{k \in \mathbb{N}} \) belongs to \( S \).  
\end{enumerate}
Then the sequence \( (x_k)_{k \in \mathbb{N}} \) converges weakly to some point \( x_\infty \in S \).  
\end{lemma}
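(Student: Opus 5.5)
The plan is the classical argument for Opial's Lemma: first show that $(x_k)$ has exactly one sequential weak cluster point, and then use weak sequential compactness of bounded sets in $\mathcal{H}$ to conclude that the whole sequence converges weakly to it.

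\emph{Step 1: boundedness and existence of a cluster point.} Since $S$ is nonempty, fix some $x^*\in S$. By hypothesis 1, $\lim_k\|x_k-x^*\|$ exists, so $(\|x_k-x^*\|)$ is bounded, and therefore $(x_k)$ is bounded. Bounded sequences in a Hilbert space have weakly convergent subsequences. Hence $(x_k)$ has at least one sequential weak cluster point, and by hypothesis 2 every such point lies in $S$.

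\emph{Step 2: uniqueness of the cluster point.} This is the key step. Let $u,v$ be two sequential weak cluster points, with $x_{k_n}\rightharpoonup u$ and $x_{m_n}\rightharpoonup v$. Both belong to $S$, so by hypothesis 1 the limits $\ell_u=\lim_k\|x_k-u\|$ and $\ell_v=\lim_k\|x_k-v\|$ exist. Expanding the squares gives, for every $k$,
\begin{equation*}
\|x_k-u\|^2-\|x_k-v\|^2 = 2\langle x_k, v-u\rangle + \|u\|^2-\|v\|^2 .
\end{equation*}
The left-hand side converges to $\ell_u^2-\ell_v^2$, so $\langle x_k,v-u\rangle$ converges to some limit $c$. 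Passing to the limit along $(k_n)$ gives $c=\langle u,v-u\rangle$, and along $(m_n)$ gives $c=\langle v,v-u\rangle$. Subtracting the two expressions yields $\|v-u\|^2=0$, so $u=v$.

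\emph{Step 3: weak convergence of the whole sequence.} Let $x_\infty$ be the unique cluster point. Suppose, for contradiction, that $x_k\not\rightharpoonup x_\infty$. Then there exist $y\in\mathcal{H}$, $\varepsilon>0$ and a subsequence along which $|\langle x_{k_n}-x_\infty,y\rangle|\ge\varepsilon$. This subsequence is bounded, so it has a further subsequence converging weakly. Its limit is a weak cluster point of $(x_k)$, hence equals $x_\infty$, which contradicts the $\varepsilon$-separation. Therefore $x_k\rightharpoonup x_\infty\in S$.

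The main obstacle is Step 2. The trick is to work with the difference of squared distances, which is affine in $x_k$, so that weak limits can be passed through it.
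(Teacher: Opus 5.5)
The paper gives no proof of this lemma: it only recalls Opial's Lemma as a classical tool for the weak convergence theorem, so there is no argument to compare against, and your proof is correct and complete. It is the standard textbook proof: boundedness from hypothesis 1, uniqueness of the weak cluster point via the affine identity $\|x_k-u\|^2-\|x_k-v\|^2=2\langle x_k,v-u\rangle+\|u\|^2-\|v\|^2$, and a subsequence argument to get weak convergence of the whole sequence.
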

Finally we have the following result which allows us to compute the resolvent of a sum of a normal cone operator and a positive  semi-definite matrix. The extension to the sum of a maximally monotone operator and a positive  semi-definite matrix can be done similarly. 
\begin{lemma}\label{sumn}
Let $y=J_{N_K+A}(x)$ where $K$ is a closed convex set in $\R^n$ and $A\in \R^{n\times n}$ is a positive semi-definite matrix. Suppose that  $Id+A=B^TB$, where $B\in \R^{n\times n}$ is invertible. Let $P=(B^T)^{-1}$. Then 
$$
 \Leftrightarrow y=P^{T}{\rm Proj}_D(Px),
$$
where the set $D=BK=\{Bk: k\in K\}$.
\end{lemma}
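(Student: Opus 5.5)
Write $y=J_{N_K+A}(x)$ as the inclusion $x\in y+Ay+N_K(y)$, that is $x\in (Id+A)y+N_K(y)=B^TBy+N_K(y)$. The idea is to change variables $z=By$ (so $y=B^{-1}z=P^Tz$, since $P^T=((B^T)^{-1})^T=B^{-1}$) and turn the inclusion into a projection characterization onto $D=BK$ in the new coordinates. Since $B$ is invertible and linear, $D$ is closed and convex whenever $K$ is, so ${\rm Proj}_D$ is well defined and single-valued. Existence and uniqueness of $y$ follow from maximal monotonicity of $N_K+A$ (sum of a maximally monotone operator and a continuous monotone linear map), but the computation below gives them directly anyway.

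First, multiply the inclusion on the left by $P=(B^T)^{-1}$. This gives $Px\in By+P\,N_K(y)$, i.e. $Px\in z+P\,N_K(B^{-1}z)$. The remaining step is the identity $P\,N_K(B^{-1}z)=N_D(z)$ for $z\in D$. I check this from the definition of the normal cone. A vector $v$ lies in $N_K(y)$ iff $\langle v,k-y\rangle\le 0$ for all $k\in K$. Writing $d=Bk$ ranges over $D$ and $z=By$, we have $\langle v,k-y\rangle=\langle v,B^{-1}(d-z)\rangle=\langle (B^{-1})^Tv,d-z\rangle=\langle Pv,d-z\rangle$. Hence $v\in N_K(y)$ iff $Pv\in N_D(z)$, and since $P$ is a bijection this gives $P\,N_K(y)=N_D(By)$. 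Also $y\in K$ iff $z\in D$, so the domains match.

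The inclusion therefore becomes $Px\in z+N_D(z)$, i.e. $z=J_{N_D}(Px)={\rm Proj}_D(Px)$, using the standard fact that the resolvent of the normal cone of a closed convex set is the projection. Transforming back, $y=B^{-1}z=P^T{\rm Proj}_D(Px)$. Each step is an equivalence, so the converse also holds, and this justifies the displayed ``$\Leftrightarrow$'' in the statement.

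The only delicate point is the normal cone transfer under the linear change of variables. The key is to use the transpose correctly, $(B^{-1})^T=(B^T)^{-1}=P$; an error here would put the wrong matrix in front of $N_K$. The rest is bookkeeping, and I expect no real obstacle.
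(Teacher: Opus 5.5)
Your proof is correct and takes the same route as the paper. You premultiply $x\in B^TBy+N_K(y)$ by $P$, substitute $z=By$, identify $P\,N_K(P^Tz)$ with $N_D(z)$, use $J_{N_D}={\rm Proj}_D$, and map back with $P^T=B^{-1}$; your explicit check of the normal-cone identity via $(B^{-1})^T=P$ fills in the one step the paper only asserts.
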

\begin{proof}
We have 
\baqn
&&y=J_{N_K+A}(x)\Leftrightarrow x=N_K(y)+(Id+A)y=N_K(y)+B^TBy\\
&& \Leftrightarrow (B^T)^{-1}x=(B^T)^{-1}N_K(B^{-1}By)+By\\
&& \Leftrightarrow Px=PN_K(P^Tz)+z\;\;{\rm where\;} z=By\\
&& \Leftrightarrow  Px= N_D(z)+z \;\;{\rm where\;}D= (P^T)^{-1} K=BK\\
&& \Leftrightarrow z=(N_D+Id)^{-1}(Px)={\rm Proj}_D(Px)\\
&& \Leftrightarrow y=B^{-1}{\rm Proj}_D(Px)=P^{T}{\rm Proj}_D(Px).
\eaqn
\begin{remark}
Note that if $A=0$ then we obtain the classical result that $J_{N_K}={\rm Proj}_K.$
\end{remark}
\end{proof}
\section{Main result}\label{sec3}
In this section, we prove the weak and linear convergence of Tseng algorithm under only the  monotonicity of the sum, not requiring the  monotonicity of each operator. 
\subsection{Weak Convergence} 

First let us propose the following assumptions. \\

\noindent$\mathbf{Assumption \;1}:$ The single-valued mapping $B: H\to H$ is $L$-Lipschitz continuous for some $L>0$. \\

\noindent$\mathbf{Assumption \;2}:$ The set-valued mapping $A: {H}\rightrightarrows {H}$ is such that $J_{\gamma {A}}$ is well-defined single-valued on $H$  and $A+B$ is maximally monotone.\\

Tseng algorithm has the form as follows.

\beq\label{det}
\begin{cases}
y_{k}= J_{\gamma {A}}(x_k-\gamma Bx_k),\\
x_{k+1}= y_k+\gamma (Bx_k-By_k), \;\;k\ge 0,\;\;x_0\in {H}.
\end{cases}
\eeq
\begin{theorem}\label{weakc}
Let Assumptions 1, 2 hold and suppose that the solution set $S:=(A+B)^{-1}(0)\neq \emptyset$. Then the sequence $(x_k)$ generated by Tseng algorithm  with $\gamma^2 L^2\le 1-\varepsilon$ for some $\varepsilon>0$ converges weakly to a solution $\tilde{x}$ of (\ref{main}).
\end{theorem}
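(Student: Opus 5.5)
The plan is to follow the classical Fejér-type argument for Tseng's method. The one change is that monotonicity is used only for the sum $A+B$, not for $A$ and $B$ separately. The key observation is that the resolvent step can be rewritten as an inclusion for $A+B$ at $y_k$. From $y_k=J_{\gamma A}(x_k-\gamma Bx_k)$ we get $x_k-\gamma Bx_k-y_k\in\gamma Ay_k$. Adding $\gamma By_k$ to both sides gives
$$
\frac{x_k-y_k}{\gamma}-(Bx_k-By_k)\in (A+B)y_k .
$$
Using the update $x_{k+1}=y_k+\gamma(Bx_k-By_k)$, this reads $\gamma^{-1}(x_k-x_{k+1})\in (A+B)y_k$. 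So only the pair $(y_k,\gamma^{-1}(x_k-x_{k+1}))\in{\rm gph}(A+B)$ is ever used. No monotonicity of $A$ or $B$ individually is needed, only single-valuedness of $J_{\gamma A}$ from Assumption 2 to define $y_k$.

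\textbf{Step 1 (Fejér inequality).} Fix $x^*\in S$, so $0\in(A+B)x^*$. Monotonicity of $A+B$ gives $\langle x_k-x_{k+1},y_k-x^*\rangle\ge 0$. Expand
$$
\Vert x_{k+1}-x^*\Vert^2=\Vert y_k-x^*\Vert^2+2\gamma\langle Bx_k-By_k,y_k-x^*\rangle+\gamma^2\Vert Bx_k-By_k\Vert^2 .
$$
Also write $\Vert x_k-x^*\Vert^2=\Vert x_k-y_k\Vert^2+\Vert y_k-x^*\Vert^2+2\langle x_k-y_k,y_k-x^*\rangle$. Subtracting, and using $x_k-y_k-\gamma(Bx_k-By_k)=x_k-x_{k+1}$, I expect to obtain
$$
\Vert x_{k+1}-x^*\Vert^2=\Vert x_k-x^*\Vert^2-\Vert x_k-y_k\Vert^2+\gamma^2\Vert Bx_k-By_k\Vert^2-2\langle x_k-x_{k+1},y_k-x^*\rangle .
$$
By Assumption 1 and monotonicity of the sum, this yields
$$
\Vert x_{k+1}-x^*\Vert^2\le \Vert x_k-x^*\Vert^2-(1-\gamma^2L^2)\Vert x_k-y_k\Vert^2\le \Vert x_k-x^*\Vert^2-\varepsilon\Vert x_k-y_k\Vert^2 .
$$

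\textbf{Step 2 (consequences).} From Step 1, $\Vert x_k-x^*\Vert$ is nonincreasing, so $\lim_k\Vert x_k-x^*\Vert$ exists for every $x^*\in S$. This is condition 1 of Lemma \ref{l:Opial}. Summing the inequality gives $\sum_k\Vert x_k-y_k\Vert^2<\infty$, hence $x_k-y_k\to0$. By Lipschitz continuity, $\Vert x_k-x_{k+1}\Vert\le(1+\gamma L)\Vert x_k-y_k\Vert\to 0$. The sequence $(x_k)$ is bounded.

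\textbf{Step 3 (weak cluster points).} Let $x_{k_j}\rightharpoonup \bar x$. Then $y_{k_j}\rightharpoonup\bar x$ as well, and $u_{k_j}:=\gamma^{-1}(x_{k_j}-x_{k_j+1})\to 0$ strongly, with $u_{k_j}\in(A+B)y_{k_j}$. Since $A+B$ is maximally monotone, its graph is sequentially closed in the weak$\times$strong topology. Concretely, for any $(v,w)\in{\rm gph}(A+B)$ we have $\langle u_{k_j}-w,y_{k_j}-v\rangle\ge0$, and passing to the limit (weak times strong) gives $\langle -w,\bar x-v\rangle\ge 0$. Maximality then yields $0\in(A+B)\bar x$, i.e.\ $\bar x\in S$. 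Lemma \ref{l:Opial} now gives weak convergence of $(x_k)$ to some $\tilde x\in S$.

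The only delicate point is Step 1. One must make sure that the cross term is exactly $\langle x_k-x_{k+1},y_k-x^*\rangle$, since this is what lets the monotonicity of $A+B$ replace the separate monotonicity of $A$ and $B$ used in the standard proof. Step 3 is routine once maximal monotonicity of the sum is assumed.
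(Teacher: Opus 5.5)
Your proof is correct and matches the paper's argument essentially step for step. You use the same inclusion $\gamma^{-1}(x_k-x_{k+1})\in(A+B)y_k$, the same Fej\'er inequality $\Vert x_{k+1}-x^*\Vert^2\le\Vert x_k-x^*\Vert^2-\varepsilon\Vert x_k-y_k\Vert^2$, and the same weak--strong closedness of ${\rm gph}(A+B)$ combined with Opial's Lemma; you derive the inequality by direct expansion where the paper uses two polarization identities, and you spell out the graph-closedness step that the paper only invokes.
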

\begin{proof}
From (\ref{det}), one has 
\beq\label{discr}
\frac{y_k-x_k}{\gamma}+Bx_k\in -Ay_k,
\eeq
which implies that 
\beq\label{rewr}
\frac{y_k-x_k}{\gamma}+Bx_k-By_k\in -(A+B)y_k.
\eeq
Let $x^*\in S$, i.e., $0\in (A+B)x^*$. 
Using the  monotonicity of $A+B$, we obtain 
\begin{equation}\label{eq0}
\left\langle
\frac{y_k-x_k}{\gamma}+{B}x_k-{B}y_k,
\, y-x^*
\right\rangle \le 0.
\end{equation}
Since $\gamma>0$, from (\ref{det}) and \eqref{eq0} we imply  that
\[
\langle x_{k+1}-x_k,  y_k-x^*\rangle \le 0,
\]
or equivalently 
$$
\langle x_{k+1}-x^*, y_k-x^*\rangle\le \langle x_k-x^*, y_k-x^*\rangle.
$$
Note that 
$$
\langle x_{k+1}-x^*, y_k-x^*\rangle=\frac{1}{2}( \Vert x_{k+1}-x^*\Vert^2+\Vert y_k-x^*\Vert^2-\Vert x_{k+1}-y_k\Vert^2)
$$
and
$$
\langle x_{k}-x^*, y_k-x^*\rangle=\frac{1}{2}( \Vert x_{k}-x^*\Vert^2+\Vert y_k-x^*\Vert^2-\Vert x_{k}-y_k\Vert^2).
$$
Therefore one has 
\baqn
\Vert x_{k+1}-x^*\Vert^2&\le& \Vert x_{k}-x^*\Vert^2+\Vert x_{k+1}-y_k\Vert^2-\Vert x_{k}-y_k\Vert^2\\
&=& \Vert x_{k}-x^*\Vert^2+\gamma^2\Vert Bx_k-By_k \Vert^2-\Vert x_{k}-y_k\Vert^2\\
&\le& \Vert x_{k}-x^*\Vert^2 +\gamma^2 L^2\Vert x_{k}-y_k\Vert^2-\Vert x_{k}-y_k\Vert^2\\
&\le& \Vert x_{k}-x^*\Vert^2-\varepsilon\Vert x_{k}-y_k\Vert^2,
\eaqn
since $B$ is $L$-Lipschitz continuous and  
$$
\gamma^2 L^2\le 1-\varepsilon.
$$
It means that the sequence $(\Vert x_{k}-x^*\Vert)$ is decreasing and hence convergent. Therefore $\Vert x_{k}-y_k\Vert\to 0.$
Let $\bar{x}$ be a weak limit point of $(x_k)$, i.e., there exists a subsequence $(x_{n_k})$ converging to $\bar{x}$. Since $\Vert x_{k}-y_k\Vert\to 0$, we deduce that $(y_{n_k})$ converges weakly to $\bar{x}$ and 
$$
\frac{y_k-x_k}{\gamma}+Bx_k-By_k\to 0.
$$
From (\ref{rewr}) and the fact that $A+B$ is maximally monotone, we imply that $\bar{x}\in S$. Using Opial's Lemma, the conclusion follows. 
\end{proof}
\begin{remark}
i) If $J_{\gamma {A}}$ is well-defined set-valued on $H$, the same conclusion holds. \\

ii) If $L$ is unknown, we can replace the constant step-size $\gamma$ by $\gamma_k>0$ small enough such that 
$$
\gamma_k^2\Vert Bx_k-By_k \Vert^2\le (1-\varepsilon)\Vert x_{k}-y_k\Vert^2,
$$
for each $k\ge 0$ and the conclusion still remains. Note that since $B$ is Lipschitz continuous, there exists $\gamma>0$ such that $\gamma_k>\gamma$ for all $k>0.$
\end{remark}
\subsection{Linear Convergence} 
Next we prove the linear convergence of Tseng algorithm when the maximal monotonicity is enhanced by the strong maximal monotonicity.\\

\noindent$\mathbf{Assumption \;3}:$ The set-valued mapping $A: {H}\rightrightarrows {H}$ is such that $J_{\gamma {A}}$ is well-defined single-valued on $H$ and $A+B$ is maximally $\alpha$-strongly monotone for some $\alpha>0$.\\

\begin{theorem}
Let Assumptions 1, 3 hold. Then the sequence $(x_k)$ generated by   Tseng algorithm with  $\gamma^2 L^2\le 1-2\gamma \alpha$ converges linearly to the unique solution $\tilde{x}$ of (\ref{main}).
\end{theorem}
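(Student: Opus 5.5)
The plan is to repeat the energy estimate from the proof of Theorem \ref{weakc}, now using the strong monotonicity term. First, since $A+B$ is maximally $\alpha$-strongly monotone, the inclusion $0\in (A+B)x$ has a unique solution $\tilde{x}$: $(A+B)^{-1}$ is single-valued and $\alpha^{-1}$-Lipschitz, and its domain is all of $H$ because $A+B$ is maximally monotone and strongly monotone, hence surjective by Minty's theorem. Uniqueness follows directly from strong monotonicity.

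Take \eqref{rewr} and $0\in (A+B)\tilde{x}$, and apply $\alpha$-strong monotonicity instead of plain monotonicity. This replaces \eqref{eq0} by
\[
\left\langle \frac{y_k-x_k}{\gamma}+Bx_k-By_k,\, y_k-\tilde{x}\right\rangle \le -\alpha\Vert y_k-\tilde{x}\Vert^2 .
\]
Since $x_{k+1}-x_k=y_k-x_k+\gamma(Bx_k-By_k)$, this gives $\langle x_{k+1}-x_k, y_k-\tilde{x}\rangle\le -\gamma\alpha\Vert y_k-\tilde{x}\Vert^2$. Expanding with the same polarization identities and the Lipschitz bound as before yields
\[
\Vert x_{k+1}-\tilde{x}\Vert^2\le \Vert x_k-\tilde{x}\Vert^2-(1-\gamma^2L^2)\Vert x_k-y_k\Vert^2-2\gamma\alpha\Vert y_k-\tilde{x}\Vert^2 .
\]
Using the hypothesis $1-\gamma^2L^2\ge 2\gamma\alpha$, write $\beta:=2\gamma\alpha>0$. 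We then get
\[
\Vert x_{k+1}-\tilde{x}\Vert^2\le \Vert x_k-\tilde{x}\Vert^2-\beta\bigl(\Vert x_k-y_k\Vert^2+\Vert y_k-\tilde{x}\Vert^2\bigr)\le \Bigl(1-\frac{\beta}{2}\Bigr)\Vert x_k-\tilde{x}\Vert^2 .
\]
The last step uses $\Vert x_k-\tilde{x}\Vert^2\le 2(\Vert x_k-y_k\Vert^2+\Vert y_k-\tilde{x}\Vert^2)$. Note that $\beta\le 1$, since $1-\gamma^2L^2\le 1$, so $q:=1-\gamma\alpha\in[1/2,1)$. Iterating gives $\Vert x_k-\tilde{x}\Vert\le q^{k/2}\Vert x_0-\tilde{x}\Vert$, which is the linear convergence claimed in the theorem.

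The main obstacle is converting the two negative terms into a multiple of $\Vert x_k-\tilde{x}\Vert^2$ without losing more than a constant factor. The hypothesis pairs the two coefficients so that both are at least $\beta$. The elementary inequality $\Vert a+b\Vert^2\le 2\Vert a\Vert^2+2\Vert b\Vert^2$ then closes the argument.
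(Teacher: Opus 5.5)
Your proof is correct and is the paper's argument: strong monotonicity applied to \eqref{rewr} gives $\langle x_{k+1}-x_k, y_k-\tilde{x}\rangle\le -\gamma\alpha\Vert y_k-\tilde{x}\Vert^2$, the polarization identities and the Lipschitz bound give the same energy inequality, and the step-size condition gives the contraction factor $1-\gamma\alpha$. You also spell out two steps the paper leaves implicit: existence of $\tilde{x}$ via surjectivity of the maximally strongly monotone operator $A+B$ (Minty), and the inequality $\Vert x_k-\tilde{x}\Vert^2\le 2(\Vert x_k-y_k\Vert^2+\Vert y_k-\tilde{x}\Vert^2)$ used in the final step.
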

\begin{proof}
Since $A+B$ is maximally strongly monotone, (\ref{main}) has a unique solution $\tilde{x}$ which satisfies $0\in (A+B)\tilde{x}$. From (\ref{rewr}) and the strong monotonicity of $A+B$, one has 
\begin{equation}
\left\langle
\frac{y_k-x_k}{\gamma}+{B}x_k-{B}y_k,
\, y-\tilde{x}
\right\rangle \le -\alpha \Vert y_k-\tilde{x}\Vert^2
\end{equation}
which deduces that 
\[
\langle x_{k+1}-x_k,  y_k-\tilde{x}\rangle \le - \gamma\alpha  \Vert y_k-\tilde{x}\Vert^2.
\]
Thus 
$$
\langle x_{k+1}-\tilde{x}, y_k-\tilde{x}\rangle\le \langle x_k-\tilde{x}, y_k-\tilde{x}\rangle- \gamma\alpha  \Vert y_k-\tilde{x}\Vert^2.
$$
Similarly as in the proof of Theorem \ref{weakc}, one has 
\baqn
\Vert x_{k+1}-\tilde{x}\Vert^2\le \Vert x_{k}-\tilde{x}\Vert^2 +\gamma^2 L^2\Vert x_{k}-y_k\Vert^2-\Vert x_{k}-y_k\Vert^2- 2\gamma\alpha  \Vert y_k-\tilde{x}\Vert^2.
\eaqn
Since $B$ is $L$-Lipschitz continuous and  
$$
\gamma^2 L^2\le 1-2\gamma \alpha,
$$
we deduce that 
\beq
\Vert x_{k+1}-\tilde{x}\Vert^2\le \Vert x_{k}-\tilde{x}\Vert^2 -2\gamma\alpha(\Vert x_{k}-y_k\Vert^2+ \Vert y_k-\tilde{x}\Vert^2)\le (1-\gamma\alpha)\Vert x_{k}-\tilde{x}\Vert^2.
\eeq
The conclusion follows. 
\end{proof}

\section{Numerical Examples}\label{sec5}
In this section, we illustrate that our result can be used to decomposed efficiently and reduce the Lipschitz constant significantly. In details, we consider the problem $0\in Ax+Bx$ where $A+B$ is maximally monotone and $B$ is non-monotone. The main part $A$ has big Lipschitz constant where the perturbed part $B$ has small Lipschitz constant. In the following examples, we stop if  $\Vert x_{k+1}-x_k\Vert\le 10^{-8}$ or the number of iterations reaches $500$.
 \begin{example}
        First we consider the problem with
        \[
        A=
        \begin{bmatrix}
        8&2&1\\
        2&6&1\\
        1&1&5
        \end{bmatrix},
        \qquad
        Bx=
        \begin{bmatrix}
        0.2\sin(x_1)\\
        0.1\cos(x_3)\\
        0.2\cos(x_2)
        \end{bmatrix}.
        \]
 Then $A+B$ is $4.33$-strongly monotone, $9.67$-Lipschitz continuous  and $B$ is $0.2$-Lipschitz continuous
    We apply  Tseng algorithm to the pair (A,B) with
        $\gamma = 2$ and the initial point
        $x^0=(1,1,1)^T$.
       Figure  \ref{fig:ex5} shows the convergence behavior of the algorithm, which stops after $22$ iterations and can find an approximate solution \[x_1 \approx
        \begin{bmatrix}
        0.00787420\\
        -0.01277598\\
        -0.03901638
        \end{bmatrix}.
        \]

        \begin{figure}[H]
            \centering
            \includegraphics[width=0.67\textwidth]{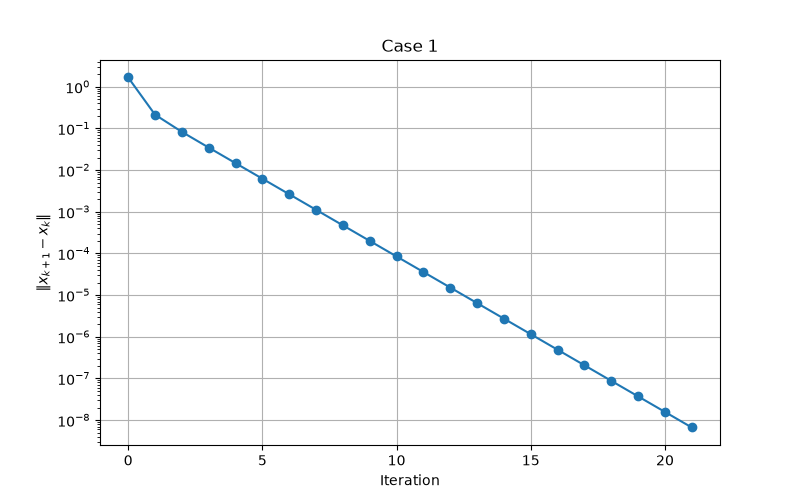}
            \caption{Tseng algorithm applied to the pair $(A,B)$ with
        $\gamma = 2$ }
            \label{fig:ex5}
        \end{figure}
        \noindent Note that since $B$ is non-monotone, classically we have to apply with other decomposition, for example $(A_1,B_1)$ where
        \[
        A_1 = 0,\qquad A_2 = A+B.
        \]
     In this case, we choose the step size $\gamma=0.05$ to satisfy the convergence condition. The algorithm converges slower, stops  after $85$ iterations and can find an approximate solution with less accuracy 
        \[
        x_2 \approx
        \begin{bmatrix}
        0.00787419\\
        -0.01277600\\
        -0.03901634
        \end{bmatrix}.
        \] 

        \begin{figure}[H]
            \centering
            \includegraphics[width=0.67\textwidth]{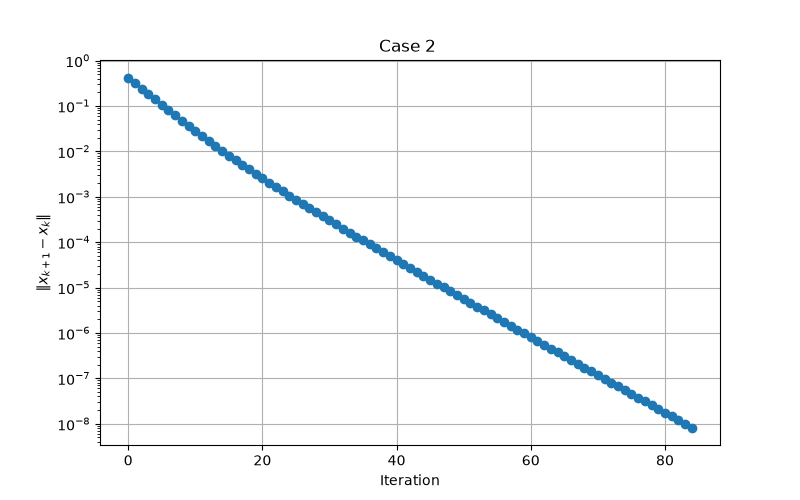}
            \caption{Tseng algorithm applied to the pair $(A_1,B_1)$ with
        $\gamma=0.05$ }    \label{fig:ex6}
        \end{figure}
        \end{example}

   \begin{example} Next, we consider the case where $A$ has big Lipschitz constant, for example with  
    	$$
    	A =\begin{bmatrix} 0.2 & 0 & 0\\ 0 & 5 & 0 \\ 0 & 0 & 100\end{bmatrix},  \quad
    	Bx = \begin{bmatrix} 0.2\sin(x_1) \\ 0.1\cos(x_3) \\ 0.2\cos(x_2)\end{bmatrix}.
    	$$
    
   Similarly, $B$ is non-monotone and $A+B$ is only monotone. We apply Tseng algorithm with 2 pairs $(A,B)$ and $(A_1,B_1)$ where 
    $$
    A_1=0, B_1=A+B.
    $$
    \begin{figure}[H]
        \centering
        \includegraphics[width=0.67\textwidth]{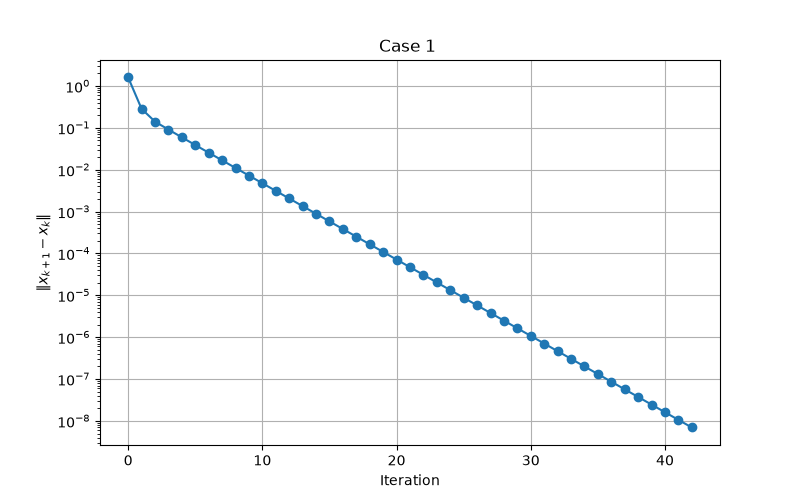}
        \caption{Tseng algorithm applied to the pair $(A,B)$ with
        $\gamma = 2$ }
        \label{fig:ex1}
    \end{figure}
    For this example, we can see the difference clearly. For the first case, we can choose $\gamma$ not  small, for example $\gamma=2$
and  the convergence to an approximate  solution is  rapid (Figure \ref{fig:ex1}).  On the other hand,  if we apply the Tseng algorithm for the pair $(A_1,B_1)$ the Lipschitz constant is  big and thus the step-size $\gamma$ is  small. Consequently,  the iterations reaches the maximum number $500$ but  without finding an approximate  solution. 
        \begin{figure}[H]
        \centering
        \includegraphics[width=0.67\textwidth]{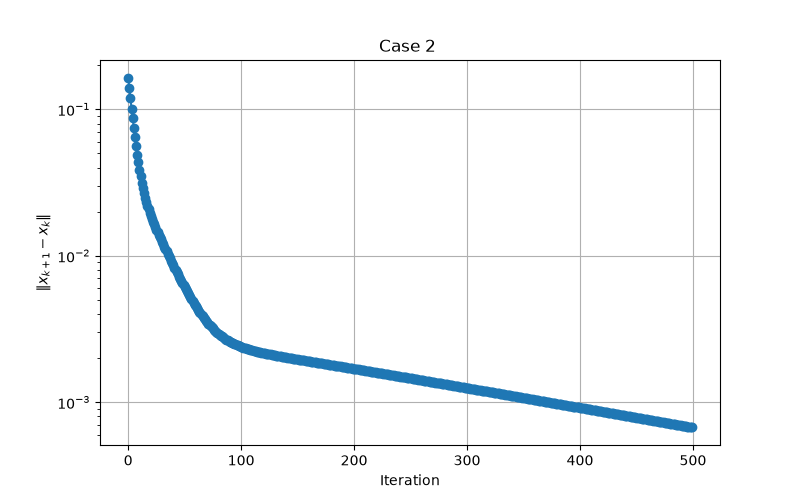}
        \caption{Tseng algorithm applied to the pair $(A_1,B_1)$ with $\gamma=0.008$ }
        \label{fig:ex2}
    \end{figure}
        
        \renewcommand{\arraystretch}{1.5} 
        \begin{table}[H]
          \centering
          \caption{Comparison of   Tseng algorithm applied to $2$ cases }
          \label{tab:ex1}
          \begin{tabular}{lcc}
            \toprule
            \textbf{Case} & {$(A,B)$} & \textbf{$(A_1,B_1)$} \\
            \midrule
            Step Size ($\gamma$)      & $2$         & $0.008$         \\
            \hline
            Iterations & $43$  &$500$\\
            \hline
            Final State ($x_{\text{final}}$) &    $\begin{bmatrix}
            1.33414412\times 10^{-8}\\
            -1.99999600\times 10^{-2}\\
            -1.99960001\times 10^{-3}
            \end{bmatrix}$ & 
            $\begin{bmatrix}
            0.21074297\\
            -0.01999996\\
            -0.00199960
            \end{bmatrix}$\\
            \hline
            Approximate Solution?  & Yes            & No             \\
            \bottomrule
          \end{tabular}
        \end{table}
    \end{example}
    
    \begin{example}
         Now we consider 
        \[
    A=
    \begin{bmatrix}
    0.2 & 0 & 0\\
    0 & 5 & 0\\
    0 & 0 & 10^{10}
    \end{bmatrix},
    \qquad
    Bx=
    \begin{bmatrix}
    0.2\sin(x_1)\\
    0.1\cos(x_3)\\
    0.2\cos(x_2)
    \end{bmatrix}.
    \]
    where the Lipschitz constant of $A+B$ is very big. The first case still converges to an approximate solution very well where the second case run very slow (since the step-size is very small) and the final state is very far from the solutions of the problem.

    \begin{figure}[H]
        \centering
        \includegraphics[width=0.67\textwidth]{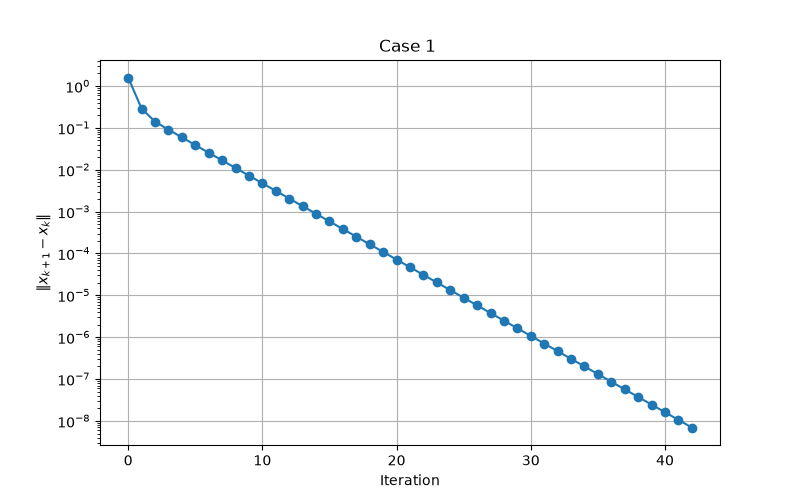}
        \caption{Tseng algorithm applied to the pair $(A,B)$ with $\gamma=2$ }    \label{fig:ex3}
    \end{figure}
    \begin{figure}[H]
        \centering
        \includegraphics[width=0.67\textwidth]{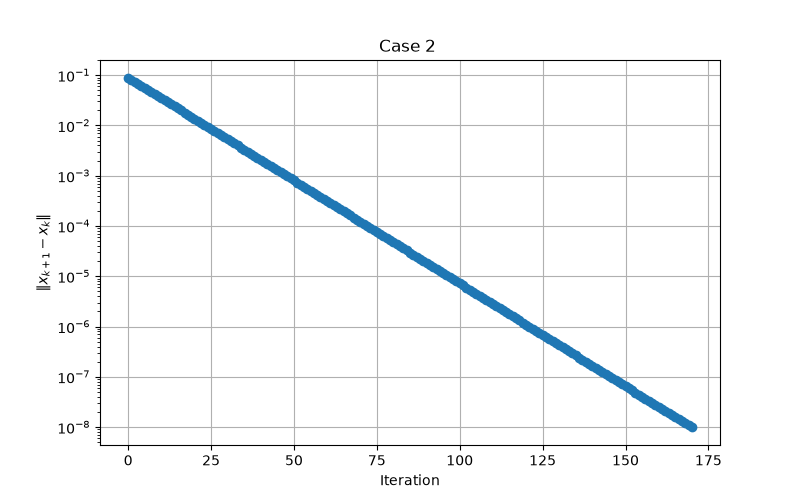}
        \caption{Tseng algorithm applied to the pair $(A_1,B_1)$ with $\gamma=10^{-11}$.}    \label{fig:ex4}
    \end{figure}
        \renewcommand{\arraystretch}{1.5} 
        \begin{table}[H]
          \centering
          \caption{Comparison of of   Tseng algorithm applied to $2$ cases }
          \label{tab:ex2}
          \begin{tabular}{lcc}
            \toprule
            \textbf{Case} & $(A,B)$ & $(A_1,B_1)$ \\
            \midrule
            Step Size ($\gamma$)      & $2$         & $10^{-11}$         \\
            \hline
            Iterations & $43$  &$171$\\
            \hline
            Final State ($x_{\text{final}}$) &    $\begin{bmatrix}
    1.33414412\times10^{-8}\\
    -2.00000000\times10^{-2}\\
    -1.99960001\times10^{-11}
    \end{bmatrix}$ & 
            $\begin{bmatrix}
            9.99999999\times10^{-1}\\
    9.99999991\times10^{-1}\\
    9.90902054\times10^{-8}
            \end{bmatrix}$\\
            \hline
           Approximate Solution?   & Yes            & No             \\
            \bottomrule
          \end{tabular}
        \end{table}
    \end{example}
           
        \begin{example}
            Finally, we consider the inclusion problem
            \[
            0\in N_C(x)+Ax+Bx
            \]
            where
            
            \[ C=[-1,1]^3, \qquad \:\:
            A=
            \begin{bmatrix}
            0.2 & 0 & 0\\
            0 & 5 & 0\\
            0 & 0 & 100
            \end{bmatrix},
            \qquad
            Bx=
            \begin{bmatrix}
            0.2\sin(x_1)\\
            0.1\cos(x_3)\\
            0.2\cos(x_2)
            \end{bmatrix}.
            \]
           We apply Tseng algorithm with 2 pairs $(A_1,B_1)$ and $(A_2,B_2)$ where 
    $$
    A_1=N_C+A, B_1=B
    $$
    and
    
          $$
    A_2=N_C, B_2=A+B
    $$  
    We can use Lemma \ref{sumn} to find the resolvent of $A_1$. For this set-valued example, the first decomposition $(A_1,B_1)$ using our result converges very well where the second decomposition $(A_1,B_1)$ using classical results still does not find an approximate solution.  
            \begin{figure}[H]
                \centering
                \includegraphics[width=0.67\textwidth]{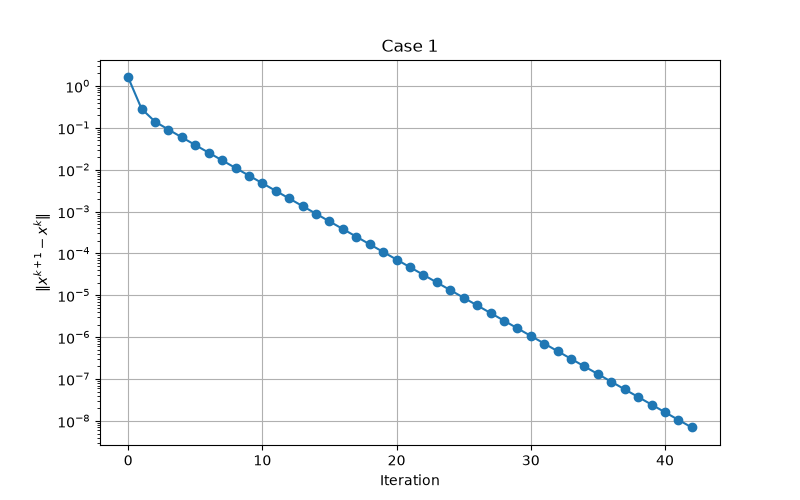}
                \caption{Tseng algorithm applied to the pair $(A_1,B_1)$ with $\gamma=2$ }
                \label{Figure_7}
            \end{figure}

                        \begin{figure}[H]
                \centering
                \includegraphics[width=0.67\textwidth]{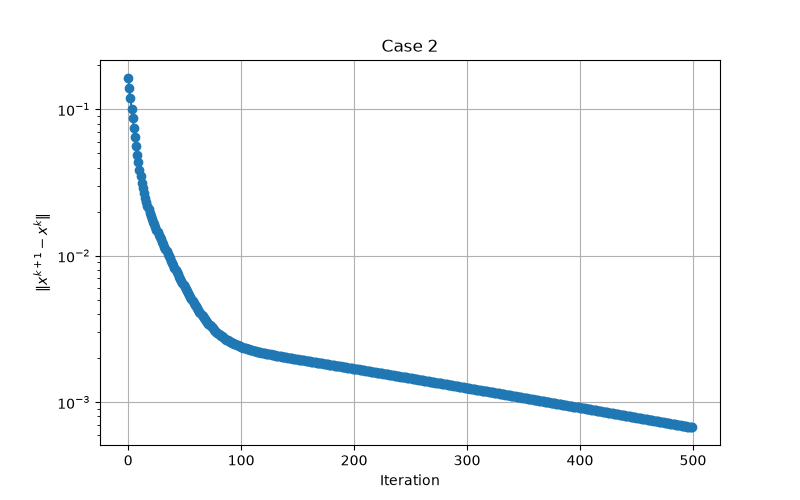}
                \caption{Tseng algorithm applied to the pair $(A_2,B_2)$ with $\gamma=0.008$}
                \label{Figure_8}
            \end{figure}

            \renewcommand{\arraystretch}{1.5}
        \begin{table}[H]
          \centering
          \caption{Comparison of of Tseng algorithm applied to 2 cases}
          \label{tab:ex4}
          \begin{tabular}{lcc}
            \toprule
            \textbf{Case} & $(A_1, B_1)$ & $(A_2, B_2)$ \\
            \midrule
            Step Size ($\gamma$)      & $2$         & $0.008$         \\
            \hline
            Iterations & $43$  &$500$\\
              \hline
            Final State ($x_{\text{final}}$) &    \begin{tabular}[c]{@{}c@{}}($1.33414412\times10^{-8}$, \\ $-0.01999996,$ \\ $-0.00199960$)\end{tabular} & 
    \begin{tabular}[c]{@{}c@{}}($0.21074297$, \\ $-0.01999996$, \\ $-0.00199960$)\end{tabular} \\
            \hline
             Approximate Solution?  & Yes            & No             \\
            \bottomrule
          \end{tabular}
        \end{table}
    \end{example}
\section{Conclusions}\label{sec6}
The paper provides a weak and linear convergence result for the Tseng algorithm applied to the sum of two non-necessarily monotone operators, only provided the monotonicity of the whole sum. Consequently, we can decompose the sum more efficiently and reduce the Lipschitz constant significantly. It would be interesting to extend our result to the sum of three operators, which is a subject of our future research.

\end{document}